\documentclass[reqno,12pt]{amsart}
\usepackage{epsfig,amscd,amssymb,amsmath,amsfonts}

\usepackage{amsmath}
\usepackage{graphicx}
\usepackage{lscape}
\usepackage{amsthm,color}
\usepackage{tikz}
\usepackage{multirow}
\usepackage{booktabs}
\usepackage{array}
\usepackage{tabularx}
\usepackage{adjustbox}
\usepackage{diagbox}
\usepackage{makecell}
\usetikzlibrary{graphs}
\usetikzlibrary{graphs,quotes}
\usetikzlibrary{decorations.pathmorphing}

\tikzset{snake it/.style={decorate, decoration=snake}}
\tikzset{snake it/.style={decorate, decoration=snake}}

\usetikzlibrary{decorations.pathreplacing,decorations.markings,snakes}

\newtheorem{theorem}{Theorem}[section]
\newtheorem{lemma}[theorem]{Lemma}
\newtheorem{proposition}[theorem]{Proposition}
\theoremstyle{definition}
\newtheorem{definition}[theorem]{Definition}
\newtheorem{corollary}[theorem]{Corollary}

\newtheorem{example}[theorem]{Example}

\theoremstyle{remark}
\newtheorem{remark}[theorem]{Remark}

\numberwithin{equation}{section}

\usepackage[margin=1.05in]{geometry}
\usepackage[colorlinks]{hyperref}
\usepackage{siunitx}
\makeatletter
\let\@wraptoccontribs\wraptoccontribs
\makeatother
\begin{document}
 \title{Quadratic Perturbations of Markov Systems}

\author{}

\author{Riddhi Pratim Ghosh}
\address{Department of Mathematics and Statistics, Bowling Green State University, Bowling Green, OH 43403, United States.}

\email{rpghosh@bgsu.edu}

\author{Mihai D. Staic}
\address{Department of Mathematics and Statistics, Bowling Green State University, Bowling Green, OH 43403, United States. }
\address{Institute of Mathematics of the Romanian Academy, PO.BOX 1-764, RO-70700 Bu\-cha\-rest, Romania.}

\email{mstaic@bgsu.edu}

\author{Sofia Stancu}
\address{ Georgia Institute of Technology, Atlanta, GA 30332, USA }

\email{sstancu3@gatech.edu}

\subjclass{Primary 60G07}

 \begin{abstract}
In this paper we study a quadratic dynamical system  that is a perturbation of a system of Markov processes. We give necessary and sufficient conditions for such a perturbation to be stochastic, show the existence of fixed points, and present a few examples and possible applications. Finally, we derive explicit contraction conditions that guarantee uniqueness of the fixed point and geometric convergence to it from every initial state. 
 \end{abstract}

\keywords{Markov system, fixed point, nonlinear operator}%
 \maketitle

\section{Introduction}

Markov processes are some of the simplest yet surprisingly efficient models for real-world phenomena. Essentially,  a Markov process consists of a sequence of random variables $Y(1),Y(2),\dots, Y(m), \dots$ that satisfy the condition  
$$P(Y(m+1)=a\; |\; Y(m)=a_m,\dots, Y(1)=a_1)=P(Y(m+1)=a\;|\; Y(m)=a_m).$$
In other words, they require that the probability of a future event depends only on the current state. Applications of Markov processes include models for population growth and decline, animal migration, diffusion of particles, random walks, risk assessment,  web-page search and ranking, etc.  For general results and applications of Markov processes one can consult \cite{dia,k,n,wdj}.

In this paper we introduce a dynamical system that is a quadratic perturbation of a system of Markov processes. More precisely, we start with a finite number of  discrete Markov processes and modify them by adding perturbation factors proportional to the inner product of the corresponding probability vectors. For example, consider two discrete random variables $Y_1$ and $Y_2$ in $\mathbb{R}^2$ that behave as Markov processes governed by the stochastic matrices $A_1$ and $A_2$, respectively
\begin{eqnarray}
\begin{cases}
Y_1(m+1)=A_1Y_1(m)\\
Y_2(m+1)=A_2Y_2(m).
\end{cases}
\label{seq1}
\end{eqnarray}
To get a perturbation of this system of Markov processes we take $b_{1,2}$ and  $b_{2,1}\in \mathbb{R}^2$ to be two null-sum vectors, and define  a dynamical system determined by 
\begin{eqnarray}
\begin{cases}
X_1(m+1)=A_1X_1(m)+\langle X_1(m),X_2(m)\rangle b_{1,2}\\
X_2(m+1)=A_2X_2(m)+\langle X_2(m),X_1(m)\rangle b_{2,1}.\\
\end{cases}
\label{seq2}
\end{eqnarray}
In general the above equations do not define a stochastic system (see Example \ref{example1}). The issue is that the entries of the vectors $X_1(m+1)$ and $X_2(m+1)$ need not be positive. In this paper we study this type of model and give necessary and sufficient conditions  for the system to be stochastic under arbitrary sign reversal of the vectors $b_{i,j}$. We also discuss the existence and uniqueness of fixed points and give a few examples.

Studying systems of interacting random variables is not a new idea. Similar problems arise in interacting populations and interacting particle systems (see, for example, \cite{a,c,fh,li}). There is also an extensive literature on quadratic stochastic operators and nonlinear Markov operators; see, for example, \cite{qso,gj, sab}. These works study nonlinear maps of probability simplices, their fixed points, trajectories, ergodicity, and contraction properties. Our model has a different structure: each component has a prescribed linear Markov evolution, while interactions enter through the terms
\[
\langle X_i(m),X_j(m)\rangle b_{i,j}.
\]
Thus, the baseline Markov dynamics are separated from the quadratic interactions, and preservation of the product of probability simplices becomes an explicit admissibility problem. In particular, we characterize stochasticity under arbitrary sign reversals of the perturbation vectors and relate strict admissibility to global contraction and convergence.

The paper is organized as follows. In Section \ref{Section2} we recall a few basic results and notations that are used throughout  the paper. In Section \ref{Section3} we discuss the two-dimensional case  and give necessary and sufficient conditions for Equation (\ref{seq2}) to define a stochastic process under arbitrary sign reversal of the vectors $b_{i,j}$. We show that this stochastic system has a fixed point, present an algorithm for computing the fixed points and give examples. 

In Section \ref{Section4} we deal with the general case of $n$ discrete random variables $X_1,\dots X_n$ in $\mathbb{R}^d$. We introduce a nonlinear operator $T({\bf A};{\bf b})$ and give necessary and sufficient conditions for the corresponding system to define a stochastic process under arbitrary sign reversal of ${\bf b}$. We show the existence of a fixed point and present a few examples. In Section \ref{Section5} we discuss the uniqueness of the fixed point giving explicit conditions that depend on the parameters ${\bf A}$ and ${\bf b}$. In Section \ref{Section6} we discuss interpretations of the model, possible extensions, and directions for future research.

\section{Preliminaries}
\label{Section2}

For completeness we recall a few notations and results that will be  used throughout this paper. We denote by  $\mathbb{R}$ the field of real numbers; for vectors we use the column notation. 
\begin{definition} A {\it probability vector} is a vector $v=(v^{(i)})_{1\leq i\leq d}\in \mathbb{R}^d$ such that $v^{(i)}\geq 0$  for all $1\leq i\leq d$ and  ${\displaystyle \sum_{s=1}^dv^{(s)}=1}$. \\
A {\it null-sum vector} is a vector $v=(v^{(i)})_{1\leq i\leq d}\in \mathbb{R}^d$  such that  ${\displaystyle \sum_{s=1}^dv^{(s)}=0}$.\\
A {\it stochastic matrix} is a matrix $A=(a_{i,j})_{1\leq i,j\leq d}\in M_d(\mathbb{R})$ such that $a_{i,j}\geq 0$ for all $1\leq i, j\leq d$, and ${\displaystyle \sum_{s=1}^da_{s,j}=1}$  for all $1\leq j\leq d$. \\
The inner product of two vectors  $v_1=(v_1^{(i)})_{1\leq i\leq d}$ and $v_2=(v_2^{(i)})_{1\leq i\leq d}\in \mathbb{R}^d$  is defined as 
$$\langle v_1, v_2 \rangle={\displaystyle \sum_{s=1}^dv^{(s)}_1v^{(s)}_2}.$$
\end{definition} 

\begin{remark} \label{remark0}
It is well known that if $A$ is a stochastic matrix and $v$ is a probability vector then $Av$ is a probability vector. 
\end{remark}

\begin{definition} For each $d\in \mathbb{N}^*$ we define $$\Delta_{d-1}=\left\{\begin{pmatrix}
x_1\\
\vdots\\
x_d
\end{pmatrix}\in \mathbb{R}^d~|~x_i\geq 0, {\displaystyle \sum_{i=1}^dx_i=1}\right\}.$$ 
\end{definition}
Note that $\Delta_{d-1}$ is nothing else but the set of all probability vectors in $\mathbb{R}^d$. It is known that $\Delta_{d-1}$ is a compact and convex subset in $\mathbb{R}^d$. In particular $(\Delta_{d-1})^n$ is a compact and convex subset of $(\mathbb{R}^d)^n$. 

The following remark is trivial but it will be used later in the paper. So, for completeness we give a short proof. 
\begin{remark} \label{remark1} If $v_1$, $v_2\in \Delta_{d-1}$ then $0\leq \langle v_1,v_2\rangle \leq 1$. Indeed since $v_i^{(j)}$ are positive we obviously have that the inner product is positive. Moreover, since $0\leq v_i^{(j)}\leq 1$ we have 
\begin{eqnarray*}
\langle v_1,v_2\rangle&=& \left\|v_1\right\|\left\|v_2\right\|cos(\theta) \leq \left\|v_1\right\|\left\|v_2\right\|\\
&=&{\displaystyle \sqrt{\sum_{s=1}^d(v_1^{(s)})^2}\sqrt{\sum_{s=1}^d(v_2^{(s)})^2}}\leq {\displaystyle \sqrt{\sum_{s=1}^dv_1^{(s)}}\sqrt{\sum_{s=1}^dv_2^{(s)}}}= 1.
\end{eqnarray*}
\end{remark}

Let $(X,d)$ be a metric space. We say that a map $T:X\to X$ is a contraction if there exists $0<q<1$ such that $d(T(x),T(y))\leq q d(x,y)$ for all $x,y\in X$. We say that $x$ is a fixed point for $T$ if $T(x)=x$.  With this notation, recall Banach's fixed-point theorem.

\begin{theorem} \cite{rb} Let $(X,d)$ be a nonempty complete metric space and $T:X\to X$ a contraction. Then $T$ has a unique fixed point in $X$.
\end{theorem}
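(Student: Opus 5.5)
The statement is Banach's fixed-point theorem. I would prove it by the standard Picard iteration argument. Fix any $x_0\in X$, which exists since $X$ is nonempty, and define $x_{m+1}=T(x_m)$ for $m\geq 0$. The argument then has three steps: show the orbit is Cauchy, show its limit is a fixed point, and show that fixed point is unique.

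\emph{Step 1 (geometric decay of consecutive distances).} By induction using the contraction inequality, $d(x_{m+1},x_m)\leq q^m d(x_1,x_0)$ for all $m$.

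\emph{Step 2 (the orbit is Cauchy).} For $k>m$, the triangle inequality gives
\begin{eqnarray*}
d(x_k,x_m)\leq \sum_{j=m}^{k-1} d(x_{j+1},x_j)\leq \sum_{j=m}^{k-1}q^j d(x_1,x_0)\leq \frac{q^m}{1-q}\, d(x_1,x_0).
\end{eqnarray*}
The right-hand side tends to $0$ as $m\to\infty$, because $0<q<1$ and the geometric series converges. Hence $(x_m)$ is a Cauchy sequence. Completeness of $X$ then produces a limit $x^*\in X$. This is the only place where completeness is used, and it is the main point of the proof.

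\emph{Step 3 (the limit is fixed).} A contraction is Lipschitz, hence continuous. Therefore
\begin{eqnarray*}
T(x^*)=\lim_{m\to\infty} T(x_m)=\lim_{m\to\infty} x_{m+1}=x^*.
\end{eqnarray*}
Alternatively, one can avoid invoking continuity by estimating directly:
\begin{eqnarray*}
d(T(x^*),x^*)\leq d(T(x^*),T(x_m))+d(x_{m+1},x^*)\leq q\,d(x^*,x_m)+d(x_{m+1},x^*)\to 0.
\end{eqnarray*}

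\emph{Step 4 (uniqueness).} Suppose $x^*$ and $y^*$ are both fixed points. Then $d(x^*,y^*)=d(T(x^*),T(y^*))\leq q\,d(x^*,y^*)$. Since $q<1$, this forces $d(x^*,y^*)=0$, so $x^*=y^*$.

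The only step that is not purely mechanical is Step 2. Two things matter there: the telescoping bound must be uniform in $k$, and the convergent geometric series must be used to control the tail. The remaining steps follow directly from the definitions.
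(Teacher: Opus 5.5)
Your proof is correct and complete. It is the standard Picard-iteration proof of Banach's theorem, and completeness is used exactly once, to obtain the limit of the Cauchy orbit. The paper gives no proof of its own and simply quotes the result from the literature, so there is nothing to compare. One useful addition: the rate $d(x^{(m)},x^\ast)\le q^m d(x^{(0)},x^\ast)$, which the paper uses in Theorem~\ref{thm:global_contraction}, follows from your argument by applying the contraction inequality repeatedly to $d(x_m,x^\ast)=d(T(x_{m-1}),T(x^\ast))$.
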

Finally, we recall Brouwer's fixed-point theorem. 
\begin{theorem} \cite{r} Let $K$ be a nonempty compact and convex subset of $\mathbb{R}^n$. Then every continuous function $f:K\to K$ has a fixed point. \label{FixedPoint}
\end{theorem}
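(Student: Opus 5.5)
Brouwer's fixed-point theorem (Theorem \ref{FixedPoint}) is a cited classical result, so I will not reprove it from scratch. Instead I will outline how I would justify it, reducing to the simplex case and proving that case combinatorially.

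\emph{Step 1: reduction to a simplex.} Every nonempty compact convex $K\subset\mathbb{R}^n$ is a retract of a large simplex $S\supset K$. Take $r=\pi_K$, the nearest-point projection onto $K$. It is well defined and $1$-Lipschitz because $K$ is closed and convex, and it restricts to the identity on $K$. Given a continuous $f:K\to K$, the map $f\circ r:S\to K\subset S$ is continuous. Suppose $f\circ r$ has a fixed point $x$. Then $x=f(r(x))\in K$, so $r(x)=x$ and hence $f(x)=x$. It therefore suffices to treat $K=\Delta_{d-1}$, or any affinely equivalent simplex.

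\emph{Step 2: Sperner's lemma.} Let $f:\Delta_{d-1}\to\Delta_{d-1}$ be continuous. For $x\in\Delta_{d-1}$, give $x$ a label $i$ with $x_i>0$ and $f(x)_i\le x_i$. Such an $i$ exists: if $f(x)_i>x_i$ for every $i$ with $x_i>0$, then summing over those indices gives $\sum f(x)_i>\sum x_i=1$, which is impossible. This labelling is a Sperner labelling of any triangulation, since a point on the face spanned by $\{e_i: i\in I\}$ receives a label in $I$.

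Sperner's lemma states that every fine triangulation contains a fully labelled cell. I would prove it by induction on dimension, using the parity/door-counting argument: the number of fully labelled cells is odd.

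\emph{Step 3: passing to the limit.} Take triangulations whose mesh tends to $0$, and in each choose a fully labelled cell with vertices $x^{(k)}_1,\dots,x^{(k)}_d$, where vertex $x^{(k)}_i$ carries label $i$. By compactness of $\Delta_{d-1}$, pass to a subsequence along which all $d$ vertices converge to a common point $x^*$. Continuity of $f$ then gives $f(x^*)_i\le x^*_i$ for every $i$. Since both $f(x^*)$ and $x^*$ sum to $1$, all these inequalities are equalities, so $f(x^*)=x^*$.

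The main obstacle is Step 2, the combinatorial Sperner lemma itself. I would handle it with the standard path-following count: count pairs (cell, facet labelled $\{1,\dots,d-1\}$), and use the inductive hypothesis on the boundary face to get an odd number of such boundary facets. Since the paper merely cites \cite{r}, in context it is enough to invoke the theorem directly.
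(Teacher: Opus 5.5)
Your sketch is correct, but there is no proof in the paper to compare it with. The paper quotes Brouwer's theorem as a classical result and uses it only as a black box, on the compact convex sets $\Delta_1\times\Delta_1$ and $\Delta_{d-1}^n$, to get fixed points of $T(\mathbf{A};\mathbf{b})$. The comparison is therefore self-containment versus brevity, and nothing later in the paper depends on how the theorem is proved.

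Your steps check out:
\begin{itemize}
\item A label exists because $\sum_{i:\,x_i>0} f(x)_i\le\sum_i f(x)_i=1$.
\item Requiring $x_i>0$ makes the labelling respect faces.
\item In the limit, the inequalities $f(x^*)_i\le x^*_i$, together with both vectors summing to $1$, force $f(x^*)=x^*$.
\end{itemize}

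Two minor remarks. First, Sperner's lemma holds for every triangulation, not only fine ones. Fineness is used only in Step 3, to make all vertices of the chosen cells converge to a common point.

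Second, for the paper's purposes your Step 1 is not optional, since $\Delta_{d-1}^n$ is a product of simplices and not itself a simplex. The nearest-point retraction from a large full-dimensional simplex containing $K$ handles this cleanly. Note that a full-dimensional simplex in $\mathbb{R}^n$ is affinely equivalent to $\Delta_n$, so the $d$ in your Step 2 is $n+1$; it is not the $d$ used elsewhere in the paper.
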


\section{A Basic Example}

\label{Section3}

As a warm-up for the general construction, in this section we discuss the case of two Markov processes in $\mathbb{R}^2$. The main goal is to build intuition and provide an explicit computation. 

As mentioned in the introduction,  in general Equation (\ref{seq2}) does not define a stochastic system. Let's see such a situation. 
\begin{example} \label{example1} Consider the system determined by Equation (\ref{seq2}). Take the stochastic matrices $A_1=\begin{pmatrix}
0.8 & 0.4\\
0.2 & 0.6 
\end{pmatrix}$,  $A_2=\begin{pmatrix} 
0.5 & 0.2\\
0.5 & 0.8 
\end{pmatrix}$ and  the null-sum vectors $b_{1,2}=-b_{2,1}=\begin{pmatrix}
0.3\\
-0.3
\end{pmatrix}$. If $X_1(0)=X_2(0)=\begin{pmatrix}
1\\
0
\end{pmatrix}$ then one  gets 
$$X_1(1)=A_1X_1(0)+\langle X_1(0),X_2(0)\rangle b_{1,2}=\begin{pmatrix}
1.1\\
-0.1
\end{pmatrix},$$ which is not a probability vector. 
\end{example}

Next we give necessary and sufficient conditions for Equation (\ref{seq2}) to define a stochastic system under arbitrary sign reversals of the null-sum vectors $b_{i,j}$. 

\begin{proposition}  \label{prop1} Let $A_1=	\begin{pmatrix}
a_1 & b_1\\
c_1 & d_1 
\end{pmatrix}$ and $A_2=	\begin{pmatrix}
a_2 & b_2\\
c_2 & d_2 
\end{pmatrix}$ be $2\times 2$ stochastic matrices, and $b_{1,2}=\begin{pmatrix}
\alpha_{1,2}^{(1)}\\
\alpha_{1,2}^{(2)}
\end{pmatrix}\in \mathbb{R}^2,$ $b_{2,1}=\begin{pmatrix}
\alpha_{2,1}^{(1)}\\
\alpha_{2,1}^{(2)}
\end{pmatrix}\in \mathbb{R}^2$ null-sum vectors. 
We define the nonlinear operator 

$$T(A_1,A_2;b_{1,2}, b_{2,1}):\mathbb{R}^2\times \mathbb{R}^2\to \mathbb{R}^2\times \mathbb{R}^2,$$ 
determined by 
\begin{equation}\label{eq1}
T(A_1,A_2;b_{1,2},b_{2,1})\begin{bmatrix}
v_1\\
v_2
\end{bmatrix}=\begin{bmatrix}
A_1v_1+\langle v_1,v_2\rangle b_{1,2}\\
A_2v_2+\langle v_2,v_1\rangle b_{2,1}
\end{bmatrix}.
\end{equation}
The following are equivalent. 
\begin{enumerate}
\item For each of the four possible choices of $\pm$ sign of the null-sum vectors $b_{1,2}$ and $b_{2,1}$,  the maps $T(A_1,A_2;\pm b_{1,2}, \pm b_{2,1})$ induce  well-defined nonlinear operators from $\Delta_1\times \Delta_1$ to $\Delta_1\times \Delta_1$. 
\item The entries of  $A_1$, $A_2$, $b_{1,2}$ and $b_{2,1}$ satisfy the  following inequalities
\begin{eqnarray}
\begin{aligned}
|\alpha_{1,2}^{(1)}|\leq \min \{a_1,b_1\},\\
|\alpha_{2,1}^{(1)}|\leq \min \{a_2,b_2\},\\
|\alpha_{1,2}^{(2)}|\leq \min \{c_1,d_1\}, \\
|\alpha_{2,1}^{(2)}|\leq \min \{c_2,d_2\}.\label{cond2}
\end{aligned}
\end{eqnarray}
\end{enumerate} 

\end{proposition}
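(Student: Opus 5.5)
The plan is to reduce everything to a statement about the first coordinate of each component, using that $\Delta_1$ is one-dimensional. Sums are automatic: since $A_i$ is stochastic and $b_{i,j}$ is null-sum, the entries of $A_iv_i+\langle v_i,v_j\rangle b_{i,j}$ always sum to $1$ for any $v_i\in\Delta_1$. So condition (1) is equivalent to nonnegativity of the four entries
\[
a_1x+b_1(1-x)\pm t\,\alpha_{1,2}^{(1)},\qquad c_1x+d_1(1-x)\pm t\,\alpha_{1,2}^{(2)},
\]
and the analogous entries for the second component. Here $v_1=(x,1-x)^T$, $v_2=(y,1-y)^T$ with $x,y\in[0,1]$, and $t=\langle v_1,v_2\rangle=xy+(1-x)(1-y)$. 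The sign of $\pm b_{2,1}$ does not affect the first component, so requiring this for all four sign choices amounts to asking, for each entry separately, that the linear part dominates $t\,|\alpha|$.

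For (1)$\Rightarrow$(2) I will evaluate at the vertices of the simplex.
\begin{itemize}
\item Taking $v_1=v_2=(1,0)^T$ gives $t=1$, and the first component becomes $(a_1\pm\alpha_{1,2}^{(1)},\,c_1\pm\alpha_{1,2}^{(2)})$. Nonnegativity for both signs gives $|\alpha_{1,2}^{(1)}|\le a_1$ and $|\alpha_{1,2}^{(2)}|\le c_1$.
\item Taking $v_1=v_2=(0,1)^T$ again gives $t=1$, and yields $|\alpha_{1,2}^{(1)}|\le b_1$ and $|\alpha_{1,2}^{(2)}|\le d_1$.
\end{itemize}
The same two choices, read in the second component, give the inequalities for $A_2$ and $b_{2,1}$. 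This establishes (\ref{cond2}).

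For (2)$\Rightarrow$(1), fix an entry, say the first entry of the first component. By Remark \ref{remark1}, $0\le t\le 1$. Hence
\[
a_1x+b_1(1-x)\pm t\,\alpha_{1,2}^{(1)}\;\ge\; a_1x+b_1(1-x)-|\alpha_{1,2}^{(1)}|\;\ge\;\min\{a_1,b_1\}-|\alpha_{1,2}^{(1)}|\;\ge\;0,
\]
because a convex combination of $a_1,b_1$ is at least their minimum. The other three entries are handled identically. Together with the sum-one observation, the image lies in $\Delta_1\times\Delta_1$ for every sign choice.

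I do not expect a real obstacle. The only points needing care are that the sign choices on $b_{1,2}$ and $b_{2,1}$ decouple, so each component can be treated separately, and that the vertex evaluations achieve $t=1$, which makes the bound $t\le 1$ sharp. This sharpness is why the condition is exactly $\min\{a_1,b_1\}$ and not something weaker.
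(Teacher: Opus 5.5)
Your proposal is correct and follows the paper's proof essentially step for step. Sums equal $1$ by stochasticity and the null-sum property. Sufficiency uses $0\le\langle v_1,v_2\rangle\le 1$ and the fact that $a_1v_1^{(1)}+b_1v_1^{(2)}\ge\min\{a_1,b_1\}$. Necessity comes from evaluating at $v_1=v_2=(1,0)^T$ and $v_1=v_2=(0,1)^T$ with both signs.
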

\begin{proof} First we show that condition (\ref{cond2}) is sufficient. Indeed, let  $v_i=\begin{pmatrix}
v_i^{(1)}\\
v_i^{(2)}
\end{pmatrix}\in \Delta_1$. The sum of the components of the vector $\begin{pmatrix}
w_1^{(1)}\\
w_1^{(2)}
\end{pmatrix}=A_1v_1 \pm \langle v_1,v_2\rangle b_{1,2}$ is  given by 
\begin{eqnarray*}w_1^{(1)}+w_1^{(2)}&=&a_1v_1^{(1)}+b_1v_1^{(2)}\pm \langle v_1,v_2\rangle\alpha_{1,2}^{(1)}+c_1v_1^{(1)}+d_1v_1^{(2)}\pm \langle v_1,v_2\rangle\alpha_{1,2}^{(2)}\\
&=&(a_1+c_1)v_1^{(1)}+(b_1+d_1)v_1^{(2)}\pm \langle v_1,v_2\rangle(\alpha_{1,2}^{(1)}+\alpha_{1,2}^{(2)})\\
&=&v_1^{(1)}+v_1^{(2)}=1.
\end{eqnarray*}
Here we  use that $a_1+c_1=b_1+d_1=1$ (because $A_1$ is stochastic), $\alpha_{1,2}^{(1)}+\alpha_{1,2}^{(2)}=0$ ($b_{1,2}$ is a null-sum vector), and $v_1^{(1)}+v_1^{(2)}=1$ ($v_1\in \Delta_1$).  A similar computation shows that $w_2^{(1)}+w_2^{(2)}=1$. 

We still need to show that the entries of these vectors are positive. Indeed, for example 
\begin{eqnarray*} w_1^{(1)}&=&a_1v_1^{(1)}+b_1v_1^{(2)}+\langle v_1,v_2\rangle\alpha_{1,2}^{(1)}\\
&\geq& min\{a_1,b_1\}(v_1^{(1)}+v_1^{(2)})+min\{0, \langle v_1,v_2\rangle\alpha_{1,2}^{(1)}\}\\
&\geq& min\{a_1,b_1\}+min\{0, \alpha_{1,2}^{(1)}\}\geq \left|\alpha_{1,2}^{(1)}\right|+min\{0, \alpha_{1,2}^{(1)}\}\geq 0.\\
\end{eqnarray*}
Here we use  that $a_1,b_1\geq 0$ ($A_1$ is stochastic), $v_1^{(1)}, v_1^{(2)}\geq 0$, $v_1^{(1)}+v_1^{(2)}=1$ ($v_1\in \Delta_1$). Moreover, since $v_i\in \Delta_1$ we know from Remark \ref{remark1} that $0\leq \langle v_1,v_2\rangle \leq 1$. 

A similar argument shows that all the other $w_i^{(j)}$ are nonnegative, which shows that  we have a well-defined nonlinear operator $$T(A_1,A_2;\pm b_{1,2},\pm b_{2,1}): \Delta_1\times \Delta_1\to \Delta_1\times \Delta_1.$$

Conversely,  suppose that $T(A_1,A_2;\pm b_{1,2},\pm b_{2,1})$ is well-defined from $\Delta_1\times \Delta_1$ to $\Delta_1\times \Delta_1$ for each of the four possible choices of $\pm$ sign. 
Let  $\begin{pmatrix}
w_1^{(1)}\\
w_1^{(2)}
\end{pmatrix}=A_1v_1 \pm \langle v_1,v_2 \rangle b_{1,2}$. 
Taking $v_1=v_2=\begin{pmatrix}
1\\
0
\end{pmatrix}\in \Delta_1$ we get 
\begin{eqnarray*} w_1^{(1)}&=&a_1v_1^{(1)}+b_1v_1^{(2)}\pm \langle v_1,v_2\rangle \alpha_{1,2}^{(1)}\\
&=&a_1\pm \alpha_{1,2}^{(1)}\geq 0,
\end{eqnarray*}
or equivalently $-a_1\leq \alpha_{1,2}^{(1)}\leq a_1$. 

Next, if $v_1=v_2=\begin{pmatrix}
0\\
1
\end{pmatrix}\in \Delta_1$ we get 
\begin{eqnarray*} w_1^{(1)}&=&a_1v_1^{(1)}+b_1v_1^{(2)}\pm\langle v_1,v_2\rangle\alpha_{1,2}^{(1)}\\
&=&b_1\pm \alpha_{1,2}^{(1)}\geq 0,
\end{eqnarray*}
or equivalently $-b_1\leq \alpha_{1,2}^{(1)}\leq b_1$. 
Combining these two inequalities we obtain 
$$|\alpha_{1,2}^{(1)}|\leq \min \{a_1,b_1\}.$$ Similarly,  one can prove the other  inequalities. 
\end{proof}

\begin{corollary}  \label{corol1} Under the assumptions of Proposition \ref{prop1} the operator $T(A_1,A_2;b_{1,2},b_{2,1})$ has  a fixed point in $\Delta_1\times \Delta_1$. 
\end{corollary}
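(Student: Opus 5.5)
The plan is to apply Brouwer's fixed-point theorem (Theorem \ref{FixedPoint}) to the restriction of $T=T(A_1,A_2;b_{1,2},b_{2,1})$ to $K=\Delta_1\times\Delta_1$. Three hypotheses need checking: $K$ is a nonempty compact convex subset of a Euclidean space, $T$ maps $K$ into itself, and $T$ is continuous on $K$.

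First, $K$ sits inside $\mathbb{R}^2\times\mathbb{R}^2\cong\mathbb{R}^4$. It is nonempty, since it contains for instance $\left(\begin{pmatrix}1\\0\end{pmatrix},\begin{pmatrix}1\\0\end{pmatrix}\right)$. By the remark in Section \ref{Section2}, $(\Delta_{d-1})^n$ is compact and convex, so with $d=n=2$ the set $K$ is compact and convex.

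Second, the assumptions of Proposition \ref{prop1} are the inequalities (\ref{cond2}). The sufficiency part of that proposition shows that under (\ref{cond2}) each of the sign choices of $T(A_1,A_2;\pm b_{1,2},\pm b_{2,1})$ sends $\Delta_1\times\Delta_1$ into itself. Taking both signs to be $+$ gives $T(K)\subseteq K$.

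Third, continuity. Each component of $T[v_1,v_2]$ has the form $A_iv_i+\langle v_i,v_j\rangle b_{i,j}$. The map $v_i\mapsto A_iv_i$ is linear, and $(v_1,v_2)\mapsto\langle v_1,v_2\rangle$ is a polynomial (bilinear) in the coordinates. So every coordinate of $T$ is a polynomial in the four input coordinates, and $T$ is continuous on all of $\mathbb{R}^4$, in particular on $K$.

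Theorem \ref{FixedPoint} then gives a point $(v_1^*,v_2^*)\in K$ with $T(v_1^*,v_2^*)=(v_1^*,v_2^*)$. The one step that is more than routine is the invariance $T(K)\subseteq K$, and it is exactly what Proposition \ref{prop1} supplies, so I expect no real obstacle. The argument gives existence only; uniqueness would require contraction estimates of the kind discussed later in the paper.
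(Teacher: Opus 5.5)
Your proposal is correct and follows the paper's own argument: Brouwer's theorem on the compact convex set $\Delta_1\times\Delta_1$, with invariance supplied by the sufficiency direction of Proposition \ref{prop1} under the inequalities (\ref{cond2}). You also make explicit the continuity of $T$ (its coordinates are polynomials) and the nonemptiness of $\Delta_1\times\Delta_1$, both of which the paper leaves implicit.
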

\begin{proof}
Since $\Delta_1^2$ is a compact and convex set, the result is a direct consequence of Theorem \ref{FixedPoint} and Proposition \ref{prop1}. 
\end{proof}


\begin{remark} With the notation from the introduction, Equation (\ref{seq2}) can be written as  
$$\begin{bmatrix}
X_1(m+1)\\
X_2(m+1)
\end{bmatrix}=T(A_1,A_2;b_{1,2}, b_{2,1})\begin{bmatrix}
X_1(m)\\
X_2(m)
\end{bmatrix}.$$
In particular, if the conditions from Proposition \ref{prop1} hold, then Equation (\ref{seq2})  defines a stochastic system that has a fixed point. 
\end{remark}

The next example shows how one can find the fixed vectors of the nonlinear operator  $T(A_1,A_2;b_{1,2},b_{2,1})$  (the existence of such vectors is known from Corollary \ref{corol1}).
\begin{example} \label{example2}

Let
\[
v_1=
\begin{pmatrix}
x\\
1-x
\end{pmatrix},
\qquad
v_2=
\begin{pmatrix}
y\\
1-y
\end{pmatrix},
\]
where $x,y\in[0,1]$, so that
\[
(v_1,v_2)\in\Delta_1\times\Delta_1.
\]
Suppose that
\[
T(A_1,A_2;b_{1,2},b_{2,1})
\begin{bmatrix}
v_1\\
v_2
\end{bmatrix}
=
\begin{bmatrix}
v_1\\
v_2
\end{bmatrix},
\]
or equivalently,

\begin{eqnarray}\label{eq4}
\begin{cases}
v_1=A_1v_1+\langle v_1,v_2\rangle \begin{pmatrix}
\alpha_{1,2}\\
-\alpha_{1,2}
\end{pmatrix}\\
v_2=A_2v_2+\langle v_2,v_1\rangle \begin{pmatrix}
\alpha_{2,1}\\
-\alpha_{2,1}
\end{pmatrix}.\\
\end{cases}
\end{eqnarray}

First, if $\alpha_{1,2}=0$ then $v_1$ is an eigenvector for $A_1$ corresponding to the eigenvalue $\lambda=1$.  Since $v_1$ is a stochastic vector, one can  determine $v_1$ and then solve the second equation for $v_2$. A similar statement is true if $\alpha_{2,1}=0$. So, without loss of generality we may assume that $\alpha_{1,2}\neq 0$ and $\alpha_{2,1}\neq 0$. 

If we multiply the above two equations by $\alpha_{2,1}$ and $\alpha_{1,2}$ respectively, and take the difference we get $$\alpha_{2,1}v_1-\alpha_{1,2}v_2=\alpha_{2,1}A_1v_1-\alpha_{1,2}A_2v_2.$$ This gives the identity 
$$\alpha_{2,1}(a_1x+b_1(1-x))-\alpha_{1,2}(a_2y+b_2(1-y))=\alpha_{2,1}x-\alpha_{1,2}y,$$
or equivalently 
$$\alpha_{2,1}(a_1-b_1-1)x-\alpha_{1,2}(a_2-b_2-1)y+\alpha_{2,1}b_1-\alpha_{1,2}b_2=0.$$

Notice that if $a_1-b_1-1=0$ then $a_1=1$ and $b_1=0$ (that is because $0\leq a_1, b_1\leq 1$). Moreover, since $A_1$ is a stochastic matrix, this implies $c_1=0$ and $d_1=1$, which means that $A_1=I_2$. Finally,  the condition from Proposition \ref{prop1} forces  $b_{1,2}=0$ (i.e. there is no perturbation). A similar statement is true when $a_2-b_2-1=0$. 

So, without loss of generality we may assume that $a_1-b_1-1\neq 0$ and $a_2-b_2-1\neq 0$. Under this assumption, one can solve for $y$ in terms of $x$: 
\begin{equation}\label{eqy}
y= \frac{\alpha_{2,1}(a_1-b_1-1)x+\alpha_{2,1}b_1-\alpha_{1,2}b_2}{\alpha_{1,2}(a_2-b_2-1)}.
\end{equation}
Finally, from Equation (\ref{eq4}) we know that 
$$a_1x+b_1(1-x)+(xy+(1-x)(1-y))\alpha_{1,2}=x.$$
Therefore, using the formula for $y$ from Equation (\ref{eqy}), we obtain a quadratic equation in $x$.  Solving this quadratic equation gives the fixed vectors for $T(A_1,A_2;b_{1,2},b_{2,1})$. 
\end{example}

Let's see an example with an explicit computation.
\begin{example} \label{example3} Since childhood, Alice and Bob have held separate birthday parties on the same day but at different times at the local library.  The library has a lounge and a patio  area. The attendees of Alice's party usually move between these two spaces according to a Markov process governed by the matrix  $A=\begin{pmatrix}
0.8 & 0.4\\
0.2 & 0.6 
\end{pmatrix}$. For Bob's party, the distribution is governed by the matrix  $B=\begin{pmatrix}
0.5 & 0.2\\
0.5 & 0.8 
\end{pmatrix}$. Finding the eigenvectors corresponding to $\lambda=1$, one can see that  towards the end of each party the distribution of guests between the lounge and patio area is given by the vectors $v_A=\begin{pmatrix}
2/3\\
1/3
\end{pmatrix}\approx \begin{pmatrix}
0.6667\\
0.3333
\end{pmatrix}$, and $v_B=\begin{pmatrix}
2/7\\
5/7
\end{pmatrix}\approx \begin{pmatrix}
0.2857\\
0.7143
\end{pmatrix}$ respectively. 

Due to an unfortunate double booking, this year Alice and Bob must share the library space for their birthdays. Let's assume that at this party the two groups behave according to the model given by Equation (\ref{seq2}), with interference given by the null-sum vectors 
$$b_{1,2}=-b_{2,1}=\begin{pmatrix}
0.1\\
-0.1
\end{pmatrix}.$$ 
Using the notation from Example \ref{example2}, we have 
$$y=\frac{6-6x}{7},$$ and the corresponding quadratic equation 
$$12x^2+31x-29=0.$$
This has two solutions $\frac{-31\pm\sqrt{2353}}{24}$. Only the positive one gives a point in $\Delta_1$, which means that $x=\frac{-31+\sqrt{2353}}{24}\approx 0.729489$, and so 
$$v_1=\begin{pmatrix}
\frac{-31+\sqrt{2353}}{24}\\
\frac{55-\sqrt{2353}}{24}
\end{pmatrix} \approx  \begin{pmatrix}
0.7295\\
0.2705
\end{pmatrix},$$ and $$v_2=
\begin{pmatrix}
\frac{55-\sqrt{2353}}{28}\\
\frac{-27+\sqrt{2353}}{28}
\end{pmatrix}
\approx
\begin{pmatrix}
0.2319\\
0.7681
\end{pmatrix}.$$ 
\end{example}

\begin{remark} One should note that since $b_{1,2}$, $b_{2,1}\in \mathbb{R}^2$ are null-sum vectors we have that $\alpha_{1,2}^{(2)}=-\alpha_{1,2}^{(1)}$ and $\alpha_{2,1}^{(2)}=-\alpha_{2,1}^{(1)}$. So, the four conditions from Equation (\ref{cond2}) are equivalent to  
\begin{eqnarray}
\begin{aligned}
|\alpha_{1,2}^{(1)}|\leq \min \{a_1,b_1,c_1,d_1\},\\
|\alpha_{2,1}^{(1)}|\leq \min \{a_2,b_2,c_2,d_2\}.\label{cond22}
\end{aligned}
\end{eqnarray}
We prefer to use the expanded condition in Proposition \ref{prop1} because that formulation aligns better with the generalization we prove in Section \ref{Section4}. 
\end{remark}

 \section{The General Case}

\label{Section4}

In this section we consider the general case of $n$ variables in $\mathbb{R}^d$.  We give conditions for our dynamical system to be stochastic, show the existence of fixed points and give a few examples.

Consider  stochastic matrices $A_1,\dots, A_n\in M_d(\mathbb{R})$ and perturbation null-sum vectors $b_{i,j}\in \mathbb{R}^d$ for all $1\leq i\neq j\leq n$ (for convenience we denote $b_{i,i}=0$). We want to find conditions such that the system 
\begin{eqnarray}
\begin{cases}
X_1(m+1)=A_1X_1(m)+{\displaystyle \sum_{s=1}^n\langle X_1(m),X_s(m)\rangle b_{1,s}}\\
X_2(m+1)=A_2X_2(m)+{\displaystyle \sum_{s=1}^n\langle X_2(m),X_s(m)\rangle b_{2,s}}\\
  \hspace{3cm} \vdots \\
X_n(m+1)=A_nX_n(m)+{\displaystyle\sum_{s=1}^n\langle X_n(m),X_s(m)\rangle b_{n,s}},\\
\end{cases}
\label{seq3}
\end{eqnarray}
defines a stochastic process under arbitrary sign reversal of the vectors $b_{i,j}$. We have the following result.

\begin{theorem}  \label{prop2} Let ${\bf A}=(A_i)_{1\leq i\leq n}$ such that   $A_i=(a_{i}^{(s,t)})_{1\leq s,t\leq d}\in M_d(\mathbb{R})$ is a stochastic $d\times d$ matrix for each $1\leq i\leq n$, and ${\bf b}=(b_{i,j})_{1\leq i, j\leq n}$ such that each $b_{i,j}=(\alpha_{i,j}^{(s)})_{1\leq s\leq d}\in \mathbb{R}^d$ is a null-sum vector for each $1\leq i, j\leq n$ (with the convention  that $b_{i,i}=0$ for all $1 \leq i\leq n$).  We define the nonlinear operator 
$$T({\bf A};{\bf b}):\overbrace{\mathbb{R}^d \times  \mathbb{R}^d \times \dots \times \mathbb{R}^d}^{n \text{ times}}\to \overbrace{\mathbb{R}^d \times \mathbb{R}^d \times \dots \times \mathbb{R}^d}^{n \text{ times}},$$ 
determined by 
$$T({\bf A};{\bf b})\begin{bmatrix}
v_1\\
v_2\\
\vdots\\
v_n
\end{bmatrix}=\begin{bmatrix}
{\displaystyle A_1v_1+\sum_{j=1}^n\langle v_1,v_j\rangle b_{1,j}}\\
{\displaystyle A_2v_2+\sum_{j=1}^n \langle v_2,v_j\rangle b_{2,j}}\\
\vdots\\
{\displaystyle A_nv_n+\sum_{j=1}^n \langle v_n,v_j\rangle b_{n,j}}
\end{bmatrix}.$$
The following are equivalent. 
\begin{enumerate}
\item For all the possible combinations of $\pm$ sign of the null-sum vectors $b_{i,j}$ we have that $T({\bf A};{\bf \pm b})$ induces a well-defined operator from $\Delta_{d-1}^n$ to  $\Delta_{d-1}^n$. 
\item For each $1\leq i\leq n$ and $1\leq s\leq d$ we have $${\displaystyle \sum_{j=1}^n|\alpha_{i,j}^{(s)}|\leq \min_{1\leq t\leq d} \{a_{i}^{(s,t)}\}}.$$  
\end{enumerate} 
\end{theorem}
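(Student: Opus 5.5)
The plan mirrors the proof of Proposition \ref{prop1}: one direction is an estimate, the other uses well-chosen test points.

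\textbf{Sufficiency (2) $\Rightarrow$ (1).} Fix a choice of signs $\epsilon_{i,j}\in\{\pm1\}$ and $(v_1,\dots,v_n)\in\Delta_{d-1}^n$. Set $w_i=A_iv_i+\sum_j\epsilon_{i,j}\langle v_i,v_j\rangle b_{i,j}$.

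First, the sum of the entries of $w_i$ equals $1$. Indeed, $A_i$ is column stochastic, so $A_iv_i$ has entry sum $\sum_t v_i^{(t)}=1$, and each $b_{i,j}$ is null-sum.

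Next, for the $s$-th entry I bound
\[
w_i^{(s)}=\sum_{t}a_i^{(s,t)}v_i^{(t)}+\sum_j\epsilon_{i,j}\langle v_i,v_j\rangle\alpha_{i,j}^{(s)}\geq \min_t a_i^{(s,t)}-\sum_j|\alpha_{i,j}^{(s)}|\geq 0 .
\]
The first term is at least $\min_t a_i^{(s,t)}$ because $v_i$ is a probability vector. For the second term I use Remark \ref{remark1}, which gives $0\le\langle v_i,v_j\rangle\le 1$, so each summand is at least $-|\alpha_{i,j}^{(s)}|$. The final inequality is exactly condition (2). Hence $w_i\in\Delta_{d-1}$.

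\textbf{Necessity (1) $\Rightarrow$ (2).} Fix $i$, $s$ and $t$. I take all vectors equal to the standard basis vector, $v_1=\dots=v_n=e_t$. Then $\langle v_i,v_j\rangle=1$ for every $j$, and $(A_ie_t)^{(s)}=a_i^{(s,t)}$. So the $s$-th entry of the $i$-th output is
\[
a_i^{(s,t)}+\sum_j\epsilon_{i,j}\alpha_{i,j}^{(s)}.
\]
Now choose the signs $\epsilon_{i,j}=-\operatorname{sgn}(\alpha_{i,j}^{(s)})$, taking $+$ when the entry is zero. The signs of $b_{k,j}$ for $k\ne i$ can be arbitrary. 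Condition (1) says this entry is nonnegative, which gives $a_i^{(s,t)}-\sum_j|\alpha_{i,j}^{(s)}|\ge0$. Since $t$ was arbitrary, minimizing over $t$ yields (2).

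\textbf{Main subtlety.} Necessity uses the freedom to choose a different sign for each $b_{i,j}$ independently, so all the perturbation terms can be aligned negatively at the same time. This is why the hypothesis in (1) ranges over all sign combinations. One also needs every inner product to equal $1$ simultaneously, and this is what forces the choice $v_j=e_t$ for all $j$. This step is the crux; everything else is the routine calculation already done for $n=2$, $d=2$ in Proposition \ref{prop1}.
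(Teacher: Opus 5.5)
Your proposal is correct and matches the paper's proof. For sufficiency you use the same lower bound $w_i^{(s)}\ge \min_t a_i^{(s,t)}-\sum_j|\alpha_{i,j}^{(s)}|$ via Remark \ref{remark1}, and for necessity you use the same test point $v_1=\dots=v_n=e_t$ with independently chosen signs $\varepsilon_{i,j}=-\operatorname{sgn}(\alpha_{i,j}^{(s)})$, followed by minimizing over $t$.
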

\begin{proof} The proof is similar to the one for Proposition \ref{prop1}. First we want to show that the above condition is sufficient. Let $v_i=\begin{bmatrix}
v_i^{(1)}\\
\vdots\\
v_i^{(d)}
\end{bmatrix}\in \Delta_{d-1}$ for each $1\leq i\leq n$, and let 
$$w_i=\begin{bmatrix}
w_i^{(1)}\\
\vdots\\
w_i^{(d)}
\end{bmatrix}=A_iv_i+\sum_{j=1}^n\langle v_i,v_j\rangle b_{i,j}.$$ From Remark \ref{remark0} we know that $A_iv_i$ is a probability vector (i.e. the sum of its entries is $1$). Moreover, since $b_{i,j}$ are null-sum vectors we get that the sum of the entries of the vector $w_i$ is equal to $1$. We need to show that all the entries of $w_i$ are positive. 

Indeed, for each $1\leq s\leq d$ we have  
\begin{eqnarray*}
w_i^{(s)}&=&\sum_{t=1}^da_i^{(s,t)}v_i^{(t)}+\sum_{j=1}^n\langle v_i,v_j\rangle \alpha_{i,j}^{(s)}\\
&\geq & \min_{1\leq t\leq d} \{a_{i}^{(s,t)}\}-\sum_{j=1}^n|\alpha_{i,j}^{(s)}|\geq 0. 
\end{eqnarray*}
Here we  use that $0\leq a_i^{(s,t)}\leq 1$ (because $A_i$ is stochastic), $0\leq v_i^{(t)}\leq 1$,  and $0\leq \langle v_i,v_j\rangle\leq 1$ (it follows from Remark \ref{remark1} because $v_i, v_j\in \Delta_{d-1}$).  

Conversely, suppose that the operator is well-defined for every independent choice of signs of the null-sum vectors $b_{i,j}$. More precisely, for each $1\leq i,j\leq n$, let $\varepsilon_{i,j}\in{-1,1}$, and suppose that

$$
T\bigl({\bf A};(\varepsilon_{i,j}b_{i,j})_{1\leq i,j\leq n}\bigr):
\Delta_{d-1}^n\longrightarrow\Delta_{d-1}^n
$$

is well-defined for every choice of $(\varepsilon_{i,j})$.

Fix $1\leq i\leq n$, $1\leq s\leq d$, and $1\leq t\leq d$, and take

$$
v_k=e_t
$$

for every $1\leq k\leq n$. Then

$$
w_i^{(s)}
=
a_i^{(s,t)}
+
\sum_{j=1}^n
\varepsilon_{i,j}\alpha_{i,j}^{(s)}
\geq 0.
$$

Since the signs can be chosen independently, choose

$$
\varepsilon_{i,j}
=
-\operatorname{sgn}\bigl(\alpha_{i,j}^{(s)}\bigr)
$$

whenever $\alpha_{i,j}^{(s)}\neq 0$, with either choice when
$\alpha_{i,j}^{(s)}=0$. It follows that

$$
a_i^{(s,t)}
-
\sum_{j=1}^n
\left|\alpha_{i,j}^{(s)}\right|
\geq 0.
$$

Hence

$$
\sum_{j=1}^n
\left|\alpha_{i,j}^{(s)}\right|
\leq
a_i^{(s,t)}
$$

for every $1\leq t\leq d$. Taking the minimum over $t$ gives

$$
\sum_{j=1}^n
\left|\alpha_{i,j}^{(s)}\right|
\leq
\min_{1\leq t\leq d}
\left\{a_i^{(s,t)}\right\},
$$

which proves the necessity of condition $(2)$.

\end{proof}

\begin{corollary}  If the conditions of Theorem \ref{prop2} hold then the operator $T({\bf A};{\bf b})$ has a fixed point in $\Delta_{d-1}^n$. 
\end{corollary}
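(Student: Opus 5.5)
The plan is to apply Brouwer's fixed-point theorem (Theorem \ref{FixedPoint}) to the map $T({\bf A};{\bf b})$ restricted to $K=\Delta_{d-1}^n$, exactly as in the proof of Corollary \ref{corol1}. We regard $K$ as a subset of $(\mathbb{R}^d)^n\cong\mathbb{R}^{dn}$. As recalled in Section \ref{Section2}, $K$ is nonempty (it contains $(e_1,\dots,e_1)$), compact, and convex, being a finite product of compact convex sets.

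The first step is to check that $T({\bf A};{\bf b})$ maps $K$ into $K$. This is the implication (2)$\Rightarrow$(1) of Theorem \ref{prop2}, applied with all signs $\varepsilon_{i,j}=+1$. More directly, the sufficiency part of that proof already shows the following for any $v_1,\dots,v_n\in\Delta_{d-1}$. Each $w_i=A_iv_i+\sum_j\langle v_i,v_j\rangle b_{i,j}$ has entries summing to $1$, because $A_i$ is stochastic and the $b_{i,j}$ are null-sum. Each entry also satisfies $w_i^{(s)}\geq \min_t a_i^{(s,t)}-\sum_j|\alpha_{i,j}^{(s)}|\geq 0$. If ``the conditions of Theorem \ref{prop2}'' is read as condition (1) instead, the sign choice $\varepsilon_{i,j}=+1$ is among those allowed, so the conclusion is immediate either way.

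The second step is continuity. Each component of $T({\bf A};{\bf b})$ is the linear map $v_i\mapsto A_iv_i$ plus a finite sum of terms $\langle v_i,v_j\rangle b_{i,j}$. Each such term is a polynomial, in fact bilinear, in the coordinates of $(v_1,\dots,v_n)$. So $T({\bf A};{\bf b})$ is a polynomial map $\mathbb{R}^{dn}\to\mathbb{R}^{dn}$, and in particular continuous on $K$.

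Brouwer's theorem then gives a point $(v_1,\dots,v_n)\in\Delta_{d-1}^n$ with $T({\bf A};{\bf b})(v_1,\dots,v_n)=(v_1,\dots,v_n)$. There is no real obstacle here: the only substantive input is the invariance $T(K)\subseteq K$, and that has already been established in Theorem \ref{prop2}. The rest is verifying the routine hypotheses of Theorem \ref{FixedPoint}.
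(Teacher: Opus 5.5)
Your proposal is correct and follows the paper's own proof: invariance of $\Delta_{d-1}^n$ under $T({\bf A};{\bf b})$ comes from Theorem \ref{prop2}, and Brouwer's theorem (Theorem \ref{FixedPoint}) is then applied on the compact convex set $\Delta_{d-1}^n$. Your explicit checks that the map is continuous (it is polynomial) and that the domain is nonempty are hypotheses of Brouwer's theorem that the paper's one-line proof leaves implicit.
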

\begin{proof}
Since $\Delta_{d-1}^n$ is a compact and convex set the result follows from  Theorem \ref{FixedPoint} and Theorem \ref{prop2}.  
\end{proof}

\begin{remark} 
As in the case $n=2$ and $d=2$, if the conditions of Theorem \ref{prop2} are
satisfied, then Equation (\ref{seq3}) defines a well-defined stochastic process
that has a fixed point.
\end{remark}

\begin{remark} Unlike in the case $d=2$ and $n=2$ (see Example \ref{example2}), in the general case it is not clear if one has an algorithm for finding the exact values of the fixed points for $T({\bf A};{\pm \bf b})$. However, for practical purposes, one can use MATLAB simulations to approximate those fixed points. 
\end{remark}

Next, we will see a few examples.

\begin{example} \label{example4} Three divisions of the Umbrella Corporation have different cultures regarding health habits, namely Health-Conscious Culture (HC), Balanced-Flexible Culture (BF), and Indulgent Culture (IC). They all attend a joint 14-day retreat with three restaurants Green Bowl (healthy), Corner Bistro (balanced) and Big Burger House (unhealthy). 

Based on data from previous years (when the retreat was held separately), we know that  the day-to-day evolution of the restaurant of choice is given by Markov processes governed by $3\times 3$ stochastic matrices:

$$HC=\begin{pmatrix}
0.6 & 0.4 & 0.5\\
0.3 & 0.2 & 0.3\\
0.1 & 0.4 & 0.2
\end{pmatrix}, \; \; BF=\begin{pmatrix}
0.4 & 0.2 & 0.4\\
0.5 & 0.6 & 0.2\\
0.1 & 0.2 & 0.4
\end{pmatrix}, \; \; 
IC=\begin{pmatrix}
0.1 & 0.3 & 0.1\\
0.2 & 0.2 & 0.1\\
0.7 & 0.5 & 0.8
\end{pmatrix}.$$ 
The first column of HC describes where individuals who ate at Green Bowl
on the previous day eat on the current day. For example, the $(1,1)$ entry
corresponds to individuals who ate at Green Bowl on both days, the $(2,1)$
entry corresponds to those who moved from Green Bowl to Corner Bistro, and
the $(3,1)$ entry corresponds to those who moved from Green Bowl to Big
Burger House. Using MATLAB, we can approximate the fixed point  of this system to get 
$$v_{HC}\approx\begin{pmatrix}0.5253\\0.2727\\ 0.2020\end{pmatrix}, \; v_{BF}\approx\begin{pmatrix}0.3030\\ 0.4848\\ 0.212\end{pmatrix}, \; {\rm and} \; v_{IC}\approx\begin{pmatrix}0.1250\\ 0.1250\\ 0.7500\end{pmatrix}.$$ Alternatively, one can compute the  eigenvectors corresponding to $\lambda=1$.

Naturally, when attending the joint retreat,  members from different groups become curious and get influenced by the eating habits of  other groups. We will assume that these changes are modeled by Equation (\ref{seq3}) with the perturbation vectors:

$$b_{1,2}=b_{2,1}=\begin{pmatrix}
-0.05\\
0.01\\
0.04
\end{pmatrix}, \; \; b_{1,3}=b_{3,1}=\begin{pmatrix}
-0.03\\
0.05\\
-0.02
\end{pmatrix},\; \;  b_{2,3}=b_{3,2}=\begin{pmatrix}
-0.03\\
-0.01\\
0.04
\end{pmatrix}.$$

As all of the corresponding absolute values in our perturbation vectors $b_{i,j}$ and $b_{i,k}$ do not sum to any more than $0.08$, and the entries of the stochastic  matrices are at least $0.1$, the conditions of Theorem \ref{prop2} are satisfied.  
Therefore, there exists a solution  $(v_1, v_2, v_3)\in (\Delta_2)^3$ that satisfies Equation (\ref{seq3}). 

Using MATLAB, one can see that the system with interference has a fixed point given by 
$$v_1\approx\begin{pmatrix} 0.4968 \\ 0.2874 \\  0.2158 \end{pmatrix}, \; \; v_2\approx\begin{pmatrix} 0.2803 \\ 0.4743\\ 0.2454 \end{pmatrix}, \; {\rm and} \; v_3\approx\begin{pmatrix}0.1108\\ 0.1346\\  0.7546\end{pmatrix}.$$

\end{example}

\begin{example} \label{example5} The isolated northern region of Eldervale is home to four cities clustered within a fifty-mile radius: Ashgard, Brimhold, Colden, and Dunrock.
Each city has two types of workers: skilled workers and unskilled workers.  Until recently, the month-to-month evolution of the two types of workers in each of the four cities was given by Markov processes governed by $2\times2$ stochastic matrices

$$A=\begin{pmatrix}
0.6 & 0.7\\
0.4 & 0.3
\end{pmatrix}, 
\; B=\begin{pmatrix}
0.2 & 0.7 \\
0.8 & 0.3
\end{pmatrix}, \; 
C=\begin{pmatrix}
0.5 & 0.6 \\
0.5 & 0.4
\end{pmatrix},\;  {\rm and } \; 
D=\begin{pmatrix}
0.2 & 0.6\\
0.8 & 0.4
\end{pmatrix}.$$%
Using MATLAB, we can approximate the fixed point of this system to get   $$v_A\approx\begin{pmatrix} 0.63636\\
 0.36364\end{pmatrix},\; v_B\approx\begin{pmatrix} 0.46667\\ 0.53333\end{pmatrix},\; v_C\approx\begin{pmatrix} 0.54545\\ 0.45455\end{pmatrix},\; {\rm and }\;  v_D\approx\begin{pmatrix} 0.42857\\ 0.57143\end{pmatrix}.$$

A major infrastructure development transformed the region, creating the conditions for a mobile workforce able to commute daily between the cities. We will assume that these changes shifted the job market so that the workforce behaves according to the model given by Equation (\ref{seq3}) with perturbation vectors:

$$b_{1,2}=\begin{pmatrix}
0.05\\
-0.05
\end{pmatrix}, \; b_{1,3}=\begin{pmatrix}
-0.04\\
0.04
\end{pmatrix}, \;b_{1,4}=\begin{pmatrix}
0.07\\
-0.07
\end{pmatrix},$$ 
$$b_{2,1}=\begin{pmatrix}
0.03\\
-0.03
\end{pmatrix},\;
b_{2,3}=\begin{pmatrix}
0.01\\
-0.01
\end{pmatrix},\; b_{2,4}=\begin{pmatrix}
-0.02\\
0.02
\end{pmatrix},$$
$$b_{3,1}=\begin{pmatrix}
0.06\\
-0.06
\end{pmatrix},\; b_{3,2}=\begin{pmatrix}
-0.08\\
0.08
\end{pmatrix}, \; b_{3,4}=\begin{pmatrix}
0.09\\
-0.09
\end{pmatrix},$$
$$b_{4,1}=\begin{pmatrix}
0.04\\
-0.04
\end{pmatrix}, \; b_{4,2}=\begin{pmatrix}
0.05\\
-0.05
\end{pmatrix},\; b_{4,3}=\begin{pmatrix}
-0.01\\
0.01
\end{pmatrix}.$$

It is easy to see that the inequalities
\[
\sum_{j=1}^{4} |\alpha_{i,j}^{(s)}|
\leq
\min_{1\leq t\leq 2}\{a_i^{(s,t)}\}
\]
are satisfied for each $1\leq i\leq4$ and $1\leq s\leq2$.
Thus, by Theorem \ref{prop2}, there exists
\[
(v_1,v_2,v_3,v_4)\in(\Delta_1)^4
\]
satisfying Equation (\ref{seq3}).

Using MATLAB, one can see that the system with interference has a fixed point given by  $$v_1\approx\begin{pmatrix} 0.6704\\ 0.3296\end{pmatrix},\; v_2\approx\begin{pmatrix} 0.47309\\ 0.52691\end{pmatrix},\; v_3\approx\begin{pmatrix} 0.57848\\ 0.42152\end{pmatrix},\; {\rm and }\; v_4\approx\begin{pmatrix} 0.45685\\ 0.54315\end{pmatrix}.$$
\end{example}

\begin{remark}
One should note that in general $T({\bf A};{\pm \bf b})$ does not have a unique fixed point (consider for example the case when $A_i=I_d$ and $b_{i,j}=0$). In Section \ref{Section5} we give conditions for $T({\bf A};{\pm \bf b}):\Delta_{d-1}^n\to \Delta_{d-1}^n$ to be a contraction which imply uniqueness of the fixed point.
\end{remark}

\section{Contraction and Global Convergence}

\label{Section5}

In this section we investigate conditions under which the nonlinear operator
$T(A;b)$ is contractive. In addition to guaranteeing uniqueness of the
fixed point, contraction provides global convergence of the iterates and
an explicit geometric rate.

First, we introduce some notation. For $x=  \begin{pmatrix}
\alpha_1\\
\alpha_2\\
\vdots\\
\alpha_d
\end{pmatrix}\in \mathbb{R}^d$ we denote $\|x\|_1=\sum_{i=1}^d|\alpha_i|.$ Let
\[
\mathcal{S}=(\Delta_{d-1})^n.
\]
We equip $\mathcal{S}$ with the block metric
\[
d_\infty(x,y)
=
\max_{1\le i\le n}\|x_i-y_i\|_1,
\]
where
\[
x=  \begin{pmatrix}
x_1\\
x_2\\
\vdots\\
x_n
\end{pmatrix} ,
\qquad
y=\begin{pmatrix}
y_1\\
y_2\\
\vdots\\
y_n
\end{pmatrix}\in \mathcal{S}=(\Delta_{d-1})^n.
\]
We denote 
\[
T_i(x)
=
A_i x_i+
\sum_{j\ne i}
\langle x_i,x_j\rangle b_{i,j},
\qquad i=1,\ldots,n.
\]
In particular we have 
$$T({\bf A};{\bf b})(x)=\begin{pmatrix}
T_1(x)\\
T_2(x)\\
\vdots\\
T_n(x)
\end{pmatrix}.$$

We first record an elementary inequality on the probability simplex.

\begin{lemma}
Let $p,q,r\in\Delta_{d-1}$. Then
\[
\left|\langle p-q,r\rangle\right|
\le
\frac12\|p-q\|_1.
\]
Consequently, for $p,q,r,s\in\Delta_{d-1}$,
\[
\left|
\langle p,r\rangle-\langle q,s\rangle
\right|
\le
\frac12\|p-q\|_1
+
\frac12\|r-s\|_1.
\]
\end{lemma}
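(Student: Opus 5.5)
The plan is to exploit that $u=p-q$ is a null-sum vector, so that pairing it with $r$ only sees the spread of the entries of $r$. Write $u=p-q=u^+-u^-$, where $u^+,u^-\ge 0$ are the positive and negative parts taken entrywise. Since $p,q\in\Delta_{d-1}$, the entries of $u$ sum to zero. Hence $\sum_s (u^+)^{(s)}=\sum_s (u^-)^{(s)}=\frac12\|u\|_1$, and this equality is the key observation.

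Next I will bound $\langle u,r\rangle=\langle u^+,r\rangle-\langle u^-,r\rangle$. Since every entry of $r$ lies in $[0,1]$ and $u^\pm\ge 0$, we have
\[
0\le \langle u^+,r\rangle\le \sum_s (u^+)^{(s)}=\tfrac12\|u\|_1,
\qquad
0\le \langle u^-,r\rangle\le \tfrac12\|u\|_1 .
\]
The difference of two numbers in $[0,\frac12\|u\|_1]$ has absolute value at most $\frac12\|u\|_1$, which gives the first inequality. An alternative route is to write $\langle u,r\rangle=\langle u,r-c\mathbf 1\rangle$ with $c=\frac12$, using that $u$ is null-sum. Then $|r^{(s)}-\frac12|\le \frac12$, so Hölder's inequality gives the same bound. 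Either argument works, and I would present the positive/negative-part one.

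For the consequence, I will telescope:
\[
\langle p,r\rangle-\langle q,s\rangle=\langle p-q,r\rangle+\langle q,r-s\rangle .
\]
The first term is handled directly by the first part. For the second term, symmetry of the inner product gives $\langle q,r-s\rangle=\langle r-s,q\rangle$, and applying the first part with the roles $(r,s,q)$ bounds it by $\frac12\|r-s\|_1$. The triangle inequality then finishes the proof.

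There is no real obstacle here. The only point needing care is the observation $\sum u^+=\sum u^-=\frac12\|u\|_1$, which comes from the null-sum property. That factor $\frac12$ is exactly what distinguishes this bound from the naive estimate $\|u\|_1\|r\|_\infty$.
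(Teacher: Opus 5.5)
Your proof is correct and follows essentially the same route as the paper. The paper also splits $p-q$ into its positive and negative index sets, uses the null-sum property to show each part has mass $\frac12\|p-q\|_1$, and bounds $\langle p-q,r\rangle$ above and below using $0\le r_k\le 1$. It then telescopes $\langle p,r\rangle-\langle q,s\rangle=\langle p-q,r\rangle+\langle q,r-s\rangle$ and applies the first assertion twice.
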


\begin{proof}
Set $z=p-q$. Since $p$ and $q$ are probability vectors,
\[
\sum_{k=1}^d z_k=0.
\]
Let
\[
P=\{k:z_k>0\},
\qquad
N=\{k:z_k<0\}.
\]
Then
\[
\sum_{k\in P}z_k
=
-\sum_{k\in N}z_k
=
\frac12\|z\|_1.
\]
Since $0\le r_k\le 1$ for all $k$,
\[
z^\top r
\le
\sum_{k\in P}z_k
=
\frac12\|z\|_1,
\]
and similarly
\[
z^\top r
\ge
\sum_{k\in N}z_k
=
-\frac12\|z\|_1.
\]
Hence
\[
|z^\top r|
\le
\frac12\|z\|_1.
\]
For the second assertion, write
\[
\langle p,r\rangle-\langle q,s\rangle
=
\langle p-q,r\rangle
+
\langle q,r-s\rangle.
\]
Applying the first assertion twice gives
\[
\left|
\langle p,r\rangle-\langle q,s\rangle
\right|
\le
\frac12\|p-q\|_1
+
\frac12\|r-s\|_1.
\]
\end{proof}

Recall from \cite{gm} that for a column-stochastic matrix $A=(a^{(s,t)})_{1\le s,t\le d}$, one can 
define its Dobrushin contraction coefficient by
\[
\delta(A)
=
\frac12
\max_{1\le r,t\le d}
\sum_{s=1}^d
\left|a^{(s,r)}-a^{(s,t)}\right|.
\]
Equivalently,
\[
\delta(A)
=
1-
\min_{1\le r,t\le d}
\sum_{s=1}^d
\min\left\{
a^{(s,r)},a^{(s,t)}
\right\}.
\]
For any $p,q\in\Delta_{d-1}$,
\[
\|A(p-q)\|_1
\le
\delta(A)\|p-q\|_1.
\]

We now obtain a global contraction condition.

\begin{theorem}[Global contraction]
\label{thm:global_contraction}
Assume that
\[
T({\bf A};{\bf b}):\mathcal S\longrightarrow\mathcal S
\]
is well-defined. Define
\[
q_i
=
\delta(A_i)
+
\sum_{j\ne i}\|b_{i,j}\|_1,
\qquad i=1,\ldots,n,
\]
and
\[
q=\max_{1\le i\le n}q_i.
\]
If
\[
q<1,
\]
then $T({\bf A};{\bf b})$ is a contraction on $(\mathcal S,d_\infty)$. More
precisely,
\[
d_\infty(T({\bf A};{\bf b})(x),T({\bf A};{\bf b})(y))
\le
q\,d_\infty(x,y)
\]
for every $x,y\in\mathcal S$.

Consequently, $T({\bf A};{\bf b})$ has a unique fixed point
\[
x^\ast=(x_1^\ast,\ldots,x_n^\ast)\in\mathcal S.
\]
Furthermore, for every $x^{(0)}\in\mathcal S$, the sequence
\[
x^{(m+1)}=T({\bf A};{\bf b})(x^{(m)})
\]
satisfies
\[
d_\infty(x^{(m)},x^\ast)
\le
q^m d_\infty(x^{(0)},x^\ast),
\]
and hence
\[
x^{(m)}\longrightarrow x^\ast
\]
geometrically as $m\to\infty$.
\end{theorem}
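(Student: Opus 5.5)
We will prove the contraction estimate componentwise and then invoke Banach's fixed-point theorem. Fix $x,y\in\mathcal S$ and an index $i$. By definition,
\[
T_i(x)-T_i(y)=A_i(x_i-y_i)+\sum_{j\ne i}\bigl(\langle x_i,x_j\rangle-\langle y_i,y_j\rangle\bigr)b_{i,j}.
\]
Taking $\|\cdot\|_1$ and using the triangle inequality, we split this into a linear part and an interaction part.

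For the linear part we use the Dobrushin estimate recalled above. Since $x_i,y_i\in\Delta_{d-1}$, it gives $\|A_i(x_i-y_i)\|_1\le\delta(A_i)\|x_i-y_i\|_1\le\delta(A_i)\,d_\infty(x,y)$.

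For the interaction part we apply the second assertion of the preceding Lemma with $p=x_i$, $q=y_i$, $r=x_j$, $s=y_j$. This yields
\[
\bigl|\langle x_i,x_j\rangle-\langle y_i,y_j\rangle\bigr|\le\tfrac12\|x_i-y_i\|_1+\tfrac12\|x_j-y_j\|_1\le d_\infty(x,y).
\]
Hence each summand has $\|\cdot\|_1$ at most $\|b_{i,j}\|_1\,d_\infty(x,y)$. Summing over $j\neq i$ gives
\[
\|T_i(x)-T_i(y)\|_1\le q_i\,d_\infty(x,y)\le q\,d_\infty(x,y).
\]
Taking the maximum over $i$ gives the claimed inequality $d_\infty(T(x),T(y))\le q\,d_\infty(x,y)$. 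The key step is the inner-product bound: its factor $\tfrac12$ on each of the two terms is exactly what makes the total coefficient $1$ rather than $2$. I expect no real obstacle beyond checking that the Lemma's hypotheses hold, which they do because all $x_k,y_k$ lie in the simplex.

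For the consequences, note that $\mathcal S$ is a closed subset of $(\mathbb R^d)^n$, and $d_\infty$ is a metric equivalent to the Euclidean one. Therefore $(\mathcal S,d_\infty)$ is a nonempty complete metric space. The assumption that $T({\bf A};{\bf b})$ is well-defined on $\mathcal S$ means it maps $\mathcal S$ into itself. If $q>0$, Banach's fixed-point theorem gives the unique fixed point $x^\ast$. If $q=0$, $T$ is constant, so the conclusion is trivial; alternatively, use any $q'\in(0,1)$.

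Finally, for the convergence rate, apply the contraction inequality with $y=x^\ast$ and use $T(x^\ast)=x^\ast$. This gives $d_\infty(x^{(m+1)},x^\ast)\le q\,d_\infty(x^{(m)},x^\ast)$, and induction on $m$ yields $d_\infty(x^{(m)},x^\ast)\le q^m d_\infty(x^{(0)},x^\ast)$. Since $q<1$, the iterates converge geometrically to $x^\ast$.
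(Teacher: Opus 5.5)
Your proposal is correct and follows the paper's proof essentially step for step. It uses the same decomposition of $T_i(x)-T_i(y)$, the Dobrushin bound for the linear part, the inner-product lemma for the interaction terms, and Banach's fixed-point theorem for uniqueness and the geometric rate; your handling of the case $q=0$ and the explicit induction for the rate only spell out details the paper leaves implicit.
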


\begin{proof}
For $x,y\in\mathcal S$ and $1\le i\le n$,
\[
\begin{aligned}
T_i(x)-T_i(y)
={}&
A_i(x_i-y_i)\\
&+
\sum_{j\ne i}
\left[
\langle x_i,x_j\rangle
-
\langle y_i,y_j\rangle
\right]b_{i,j}.
\end{aligned}
\]
Taking the $\ell_1$-norm gives
\[
\begin{aligned}
\|T_i(x)-T_i(y)\|_1
\le{}&
\|A_i(x_i-y_i)\|_1\\
&+
\sum_{j\ne i}
\left|
\langle x_i,x_j\rangle
-
\langle y_i,y_j\rangle
\right|
\|b_{i,j}\|_1.
\end{aligned}
\]
By the Dobrushin contraction inequality,
\[
\|A_i(x_i-y_i)\|_1
\le
\delta(A_i)\|x_i-y_i\|_1.
\]
By the preceding lemma,
\[
\left|
\langle x_i,x_j\rangle
-
\langle y_i,y_j\rangle
\right|
\le
\frac12\|x_i-y_i\|_1
+
\frac12\|x_j-y_j\|_1.
\]
Therefore,
\[
\begin{aligned}
\|T_i(x)-T_i(y)\|_1
\le{}&
\delta(A_i)\|x_i-y_i\|_1\\
&+
\frac12
\sum_{j\ne i}
\|b_{i,j}\|_1
\left(
\|x_i-y_i\|_1+\|x_j-y_j\|_1
\right).
\end{aligned}
\]
Since
\[
\|x_k-y_k\|_1
\le
d_\infty(x,y)
\]
for every $k$,
\[
\|T_i(x)-T_i(y)\|_1
\le
\left[
\delta(A_i)+
\sum_{j\ne i}\|b_{i,j}\|_1
\right]
d_\infty(x,y).
\]
Thus
\[
\|T_i(x)-T_i(y)\|_1
\le
q_i d_\infty(x,y).
\]
Taking the maximum over $i$ yields
\[
d_\infty(T({\bf A};{\bf b})(x),T({\bf A};{\bf b})(y))
\le
q\,d_\infty(x,y).
\]
If $q<1$, then $T({\bf A};{\bf b})$ is a contraction.

Since $\mathcal S$ is a closed subset of a finite-dimensional normed
space, $(\mathcal S,d_\infty)$ is complete. The Banach fixed-point
theorem therefore implies existence and uniqueness of a fixed point
$x^\ast$, together with
\[
d_\infty(T({\bf A};{\bf b})^m(x^{(0)}),x^\ast)
\le
q^m d_\infty(x^{(0)},x^\ast).
\]
\end{proof}

The contraction criterion has a direct connection with the
stochasticity conditions established in Theorem~4.1. For
$1\le i\le n$ and $1\le s\le d$, define
\[
m_i^{(s)}
=
\min_{1\le t\le d}
a_i^{(s,t)}
\]
and
\[
\kappa_i
=
\sum_{s=1}^d m_i^{(s)}.
\]

\begin{lemma}
\label{lem:dobrushin_bound}
For every $i=1,\ldots,n$,
\[
\delta(A_i)\le 1-\kappa_i.
\]
\end{lemma}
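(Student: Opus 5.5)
We will work from the second (equivalent) formula for the Dobrushin coefficient recalled above,
\[
\delta(A_i)
=
1-
\min_{1\le r,t\le d}
\sum_{s=1}^d
\min\left\{
a_i^{(s,r)},a_i^{(s,t)}
\right\}.
\]
The claim $\delta(A_i)\le 1-\kappa_i$ is then equivalent to the statement that for every pair of columns $r,t$,
\[
\sum_{s=1}^d \min\{a_i^{(s,r)},a_i^{(s,t)}\}\ \ge\ \kappa_i .
\]
Equivalently, the minimum over pairs of columns of this overlap sum is at least $\kappa_i$.

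The key step is a termwise comparison. Fix $r,t$ and a row index $s$. Both $a_i^{(s,r)}$ and $a_i^{(s,t)}$ are entries of row $s$ of $A_i$, so each is at least $m_i^{(s)}=\min_{1\le u\le d}a_i^{(s,u)}$. Hence
\[
\min\{a_i^{(s,r)},a_i^{(s,t)}\}\ge m_i^{(s)} .
\]
Summing over $s=1,\dots,d$ gives
\[
\sum_{s}\min\{a_i^{(s,r)},a_i^{(s,t)}\}\ge\sum_s m_i^{(s)}=\kappa_i .
\]
Since this holds for every pair $(r,t)$, it also holds for the minimum over pairs, and substituting into the formula for $\delta(A_i)$ gives the lemma.

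The only possible obstacle is if one prefers not to rely on the equivalence of the two expressions for $\delta$, which is quoted from \cite{gm}. In that case we argue directly from the first definition. Write $a^{(s,r)}=m_i^{(s)}+c^{(s,r)}$ with $c^{(s,r)}\ge 0$. Then $|a^{(s,r)}-a^{(s,t)}|\le c^{(s,r)}+c^{(s,t)}$. Each column of $A_i$ sums to $1$, so $\sum_s c^{(s,r)}=1-\kappa_i$ for every column $r$. Summing over $s$ and halving therefore gives
\[
\tfrac12\sum_s|a^{(s,r)}-a^{(s,t)}|\le\tfrac12\bigl(2(1-\kappa_i)\bigr)=1-\kappa_i .
\]
This is the same bound, obtained with no appeal to the alternative formula. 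Either route is a few lines.
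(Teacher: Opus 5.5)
Your main argument is correct and is essentially the paper's proof. Both use the representation $1-\delta(A_i)=\min_{r,t}\sum_s\min\{a_i^{(s,r)},a_i^{(s,t)}\}$, bound each term below by $m_i^{(s)}$, sum over $s$, and minimize over $(r,t)$. Your backup route from the first definition is also valid, since $|a^{(s,r)}-a^{(s,t)}|\le c^{(s,r)}+c^{(s,t)}$ and $\sum_s c^{(s,r)}=1-\kappa_i$, and it has the small advantage of not relying on the quoted equivalence of the two formulas for $\delta$.
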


\begin{proof}
Using the alternative representation of the Dobrushin coefficient,
\[
1-\delta(A_i)
=
\min_{1\le r,t\le d}
\sum_{s=1}^d
\min\left\{
a_i^{(s,r)},a_i^{(s,t)}
\right\}.
\]
For every $r,t$ and $s$,
\[
m_i^{(s)}
=
\min_{1\le \ell\le d}
a_i^{(s,\ell)}
\le
\min\left\{
a_i^{(s,r)},a_i^{(s,t)}
\right\}.
\]
Therefore,
\[
\kappa_i
\le
\sum_{s=1}^d
\min\left\{
a_i^{(s,r)},a_i^{(s,t)}
\right\}
\]
for every $r,t$. Taking the minimum over $r,t$ gives
\[
\kappa_i
\le
1-\delta(A_i),
\]
which proves the result.
\end{proof}

\begin{corollary}[Nonexpansiveness under the stochasticity condition]
\label{cor:nonexpansive}
Suppose the conditions of Theorem~4.1 hold, that is,
\[
\sum_{j=1}^n
|\alpha_{i,j}^{(s)}|
\le
\min_{1\le t\le d}a_i^{(s,t)}
\]
for every $1\le i\le n$ and $1\le s\le d$. Then
\[
d_\infty(T({\bf A};{\bf b})(x),T({\bf A};{\bf b})(y))
\le
d_\infty(x,y)
\]
for all $x,y\in\mathcal S$. Thus $T({\bf A};{\bf b})$ is nonexpansive.
\end{corollary}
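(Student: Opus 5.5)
The plan is to reduce the corollary to the estimate already established in the proof of Theorem~\ref{thm:global_contraction}. That proof did not use the hypothesis $q<1$ until its last line. It showed that for every $x,y\in\mathcal S$ and every $i$,
\[
\|T_i(x)-T_i(y)\|_1\le q_i\, d_\infty(x,y),\qquad q_i=\delta(A_i)+\sum_{j\ne i}\|b_{i,j}\|_1 .
\]
That step only needs $T({\bf A};{\bf b})$ to be well-defined on $\mathcal S$, and under the stated conditions Theorem~\ref{prop2} guarantees this. So it suffices to show $q_i\le 1$ for every $i$; taking the maximum over $i$ then gives the claimed nonexpansiveness.

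To bound $q_i$, first sum the stochasticity inequalities over $s$. Using the convention $b_{i,i}=0$, we get
\[
\sum_{j\ne i}\|b_{i,j}\|_1=\sum_{s=1}^d\sum_{j=1}^n|\alpha_{i,j}^{(s)}|\le \sum_{s=1}^d m_i^{(s)}=\kappa_i .
\]
Next, Lemma~\ref{lem:dobrushin_bound} gives $\delta(A_i)\le 1-\kappa_i$. Adding the two bounds yields
\[
q_i\le (1-\kappa_i)+\kappa_i=1 .
\]

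No step here is a real obstacle. The only point needing care is to say explicitly that the per-component inequality from the contraction proof holds without assuming $q<1$. I would restate its short derivation briefly, via the Dobrushin inequality and the lemma on $|\langle p,r\rangle-\langle q,s\rangle|$, so the argument does not depend on a hypothesis we lack. I would also add a remark: the same computation shows that if the inequality $\sum_j|\alpha_{i,j}^{(s)}|\le m_i^{(s)}$ is strict for at least one $s$ in each $i$, or if the bound of Lemma~\ref{lem:dobrushin_bound} is strict, then $q<1$ and Theorem~\ref{thm:global_contraction} applies.
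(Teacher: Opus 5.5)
Your proposal is correct and matches the paper's proof. Like the paper, you sum the stochasticity inequalities over $s$ to get $\sum_{j\ne i}\|b_{i,j}\|_1\le\kappa_i$, combine this with Lemma~\ref{lem:dobrushin_bound} to obtain $q_i\le 1$, and then invoke the per-component estimate from the proof of Theorem~\ref{thm:global_contraction}; your explicit remark that this estimate never uses $q<1$ usefully spells out what the paper leaves implicit in ``follows from the proof of'' that theorem.
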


\begin{proof}
Since
\[
b_{i,j}
=
\left(
\alpha_{i,j}^{(1)},\ldots,
\alpha_{i,j}^{(d)}
\right)^\top,
\]
we have
\[
\begin{aligned}
\sum_{j\ne i}\|b_{i,j}\|_1
&=
\sum_{j\ne i}
\sum_{s=1}^d
|\alpha_{i,j}^{(s)}|\\
&=
\sum_{s=1}^d
\sum_{j\ne i}
|\alpha_{i,j}^{(s)}|\\
&\le
\sum_{s=1}^d m_i^{(s)}
=
\kappa_i.
\end{aligned}
\]
By Lemma~\ref{lem:dobrushin_bound},
\[
\delta(A_i)\le1-\kappa_i.
\]
Hence
\[
\delta(A_i)+
\sum_{j\ne i}\|b_{i,j}\|_1
\le1.
\]
The conclusion now follows from the proof of
Theorem~\ref{thm:global_contraction}.
\end{proof}

We can strengthen the preceding result whenever there is strict slack
in the stochasticity inequalities. Define
\[
\varepsilon_i
=
\sum_{s=1}^d
\left[
m_i^{(s)}
-
\sum_{j\ne i}
|\alpha_{i,j}^{(s)}|
\right].
\]

\begin{corollary}[Strict stochasticity and global convergence]
\label{cor:strict_contraction}
Suppose the assumptions of Theorem~4.1 hold and
\[
\varepsilon_i>0,
\qquad i=1,\ldots,n.
\]
Then $T({\bf A};{\bf b})$ is a contraction. In particular,
\[
q
\le
1-\min_{1\le i\le n}\varepsilon_i
<1.
\]
Consequently, $T({\bf A};{\bf b})$ has a unique fixed point in $\mathcal S$, and
every orbit converges geometrically to this fixed point.

In particular, it is sufficient that, for every $i$, at least one of
the inequalities
\[
\sum_{j\ne i}
|\alpha_{i,j}^{(s)}|
\le
\min_{1\le t\le d}
a_i^{(s,t)},
\qquad s=1,\ldots,d,
\]
is strict.
\end{corollary}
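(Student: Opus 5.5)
The plan is to bound each $q_i$ from Theorem~\ref{thm:global_contraction} directly by $1-\varepsilon_i$, reusing the chain of estimates from the proof of Corollary~\ref{cor:nonexpansive}. Everything then follows from Theorem~\ref{thm:global_contraction}.

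First, exchange the order of summation, as in Corollary~\ref{cor:nonexpansive}, to write
\[
\sum_{j\ne i}\|b_{i,j}\|_1=\sum_{s=1}^d\sum_{j\ne i}|\alpha_{i,j}^{(s)}| .
\]
By the definitions of $\kappa_i$ and $\varepsilon_i$ this equals exactly $\kappa_i-\varepsilon_i$. So in this step the inequality used in Corollary~\ref{cor:nonexpansive} is replaced by an identity that keeps track of the slack.

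Next, Lemma~\ref{lem:dobrushin_bound} gives $\delta(A_i)\le 1-\kappa_i$. Adding the two relations,
\[
q_i=\delta(A_i)+\sum_{j\ne i}\|b_{i,j}\|_1\le 1-\kappa_i+\kappa_i-\varepsilon_i=1-\varepsilon_i .
\]
Taking the maximum over $i$ yields $q\le 1-\min_i\varepsilon_i$, and this is $<1$ because each $\varepsilon_i>0$ and there are finitely many indices. The hypotheses of Theorem~\ref{thm:global_contraction} then hold: well-definedness of $T({\bf A};{\bf b})$ on $\mathcal S$ comes from Theorem~\ref{prop2}, and $q<1$ was just shown. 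That theorem supplies the contraction property, the unique fixed point, and the geometric convergence $d_\infty(x^{(m)},x^\ast)\le q^m d_\infty(x^{(0)},x^\ast)$.

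For the final sufficiency statement, note that Theorem~\ref{prop2}(2) includes the term $j=i$, and $b_{i,i}=0$, so it gives each summand $m_i^{(s)}-\sum_{j\ne i}|\alpha_{i,j}^{(s)}|\ge 0$. Hence $\varepsilon_i$ is a sum of nonnegative terms, and it is positive as soon as one of them is positive, i.e.\ as soon as one inequality for some $s$ is strict. The only delicate point is this bookkeeping between the sums over $j$ and over $j\ne i$. I do not expect any real obstacle, since the argument merely tracks the slack through the proof of Corollary~\ref{cor:nonexpansive}.
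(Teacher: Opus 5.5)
Your proposal is correct and follows the paper's proof essentially verbatim: the identity $\sum_{j\ne i}\|b_{i,j}\|_1=\kappa_i-\varepsilon_i$ together with Lemma~\ref{lem:dobrushin_bound} gives $q_i\le 1-\varepsilon_i$, and then Theorem~\ref{thm:global_contraction} finishes the argument. You also check explicitly that each summand of $\varepsilon_i$ is nonnegative (using $b_{i,i}=0$), which justifies the final sufficiency clause; the paper leaves this step implicit.
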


\begin{proof}
By definition,
\[
\sum_{j\ne i}\|b_{i,j}\|_1
=
\kappa_i-\varepsilon_i.
\]
Thus, using Lemma~\ref{lem:dobrushin_bound},
\[
\begin{aligned}
\delta(A_i)
+
\sum_{j\ne i}\|b_{i,j}\|_1
&\le
1-\kappa_i+\kappa_i-\varepsilon_i\\
&=
1-\varepsilon_i.
\end{aligned}
\]
Therefore
\[
q
\le
1-\min_i\varepsilon_i<1.
\]
The conclusion follows from
Theorem~\ref{thm:global_contraction}.
\end{proof}

\begin{remark} Note that for Examples \ref{example4} and \ref{example5} the inequality $\sum_{j\ne i}
|\alpha_{i,j}^{(s)}|
\le
\min_{1\le t\le d}
a_i^{(s,t)}$ is always strict in both examples; in particular, the fixed points are unique. 
\end{remark}

\section{Discussion and Further Directions}\label{Section6}

The model introduced in this paper separates baseline Markov dynamics from pairwise interactions through quadratic perturbation terms. This structure allows explicit conditions for stochasticity and, under stronger assumptions, uniqueness and global convergence of the fixed point.

The perturbation vectors also admit a natural interpretation. If

$$
b_{j,i}=-b_{i,j},
$$
the interaction between the $i$-th and $j$-th components is antagonistic, whereas
$$
b_{j,i}=b_{i,j}
$$
corresponds to a synergistic interaction. More generally, no symmetry between $b_{i,j}$ and $b_{j,i}$ is required, allowing asymmetric interactions as well.

The factor
$$
\langle X_i(m),X_j(m)\rangle
$$
controls the strength of the interaction. It lies in $[0,1]$ and measures overlap between the two probability vectors. However, it is not a normalized similarity measure. For example, if
$$
X_1=X_2=
\begin{pmatrix}
1/2\\
1/2
\end{pmatrix},
$$
then $\langle X_1,X_2\rangle=1/2$, even though the two distributions are identical. Thus, the inner product favors overlap concentrated near the standard basis vectors.

This suggests replacing the inner product by a more general similarity function
$$
K:\Delta_{d-1}\times\Delta_{d-1}\longrightarrow[0,1]
$$
and considering
$$
X_i(m+1)
=
A_iX_i(m)
+
\sum_{j\ne i}
K\bigl(X_i(m),X_j(m)\bigr)b_{i,j}.
$$

The sufficiency argument in Theorem~4.1 continues to hold whenever $0\leq K(x,y)\leq1$. If, in addition,
$$
K(e_t,e_t)=1,\qquad t=1,\ldots,d,
$$
the same vertex argument yields necessity. Extending the contraction results of Section~5 would require an appropriate Lipschitz condition on $K$.

Theorem~4.1 may also be viewed as a robust admissibility result: its conditions depend only on the magnitudes of the perturbations and guarantee stochasticity under every possible sign reversal of the vectors $b_{i,j}$. Section~5 further shows that these stochasticity conditions imply nonexpansiveness, while strict inequalities imply contraction and hence a unique globally attracting fixed point. The boundary case, where one or more inequalities hold with equality, remains an interesting direction for further study.

Finally, parameter estimation provides a natural statistical extension. If the matrices $A_i$ are known and the states are observed, then
$$
X_i(m+1)-A_iX_i(m)
=
\sum_{j\ne i}
\langle X_i(m),X_j(m)\rangle b_{i,j},
$$
which is linear in the unknown perturbation vectors. This suggests constrained estimation procedures incorporating the null-sum and stochasticity conditions. Joint estimation of the matrices $A_i$ and the perturbation vectors is another possible direction for future work.



\section*{Statements and Declarations}

The authors have no relevant financial or non-financial interests to disclose. Data sharing is not applicable to this article as no data sets were generated or analyzed during the current study.

The authors report that generative AI was not used in their research or preparation of this manuscript.

\section*{Acknowledgment}   We thank  Kit Chan and John Chen  for some discussions on an earlier version of the manuscript. This work originated as part of an undergraduate research project conducted during the summer of 2026. 

\bibliographystyle{amsalpha}

 \end{document}